\documentclass{amsart}
\usepackage{amssymb}
\usepackage{amsmath}
\usepackage{amsfonts}
\usepackage{mathrsfs}

\newtheorem{theorem}{Theorem}[section]
\newtheorem{lemma}[theorem]{Lemma}

\theoremstyle{definition}

\theoremstyle{remark}
\newtheorem{remark}[theorem]{Remark}

\numberwithin{equation}{section}

\begin{document}

\title[BSE and BED properties of $L^1(G,\omega)$]{On the BSE and BED properties of the Beurling algebra $L^1(G,\omega)$}


\author[J. J. Dabhi]{Jekwin J. Dabhi}
\address{Institute of Infrastructure Technology Research and Management (IITRAM), Ahmedabad - 380026, Gujarat, India}
\curraddr{}
\email{jekwin.13@gmail.com}
\thanks{}

\author[P. A. Dabhi]{Prakash A. Dabhi}
\address{Institute of Infrastructure Technology Research and Management (IITRAM), Ahmedabad - 380026, Gujarat, India}
\curraddr{}
\email{lightatinfinite@gmail.com, prakashdabhi@iitram.ac.in}
\thanks{The first author is thankful  for  research support from CSIR HRDG, India (file no. 09/1274(13914)/2022-EMR-I) for the Senior Research Fellowship}

\subjclass[2010]{Primary 46J05; 46J10; 43A25}

\keywords{locally compact abelian group, BSE algebra, beurling algebra, multiplier algebra, BED algebra}

\date{}

\dedicatory{}

\begin{abstract}
Let $G$ be a locally compact abelian group, and let $\omega:G \to [1,\infty)$ be a weight, i.e.,  $\omega$ is measurable, $\omega$ is locally bounded and $\omega(s+t)\leq \omega(s)\omega(t)$ for all $s, t \in G$. If $\omega^{-1}$ is vanishing at infinity, then we show that the Beurling algebra $L^1(G,\omega)$ is both BSE- algebra and BED- algebra.
\end{abstract}

\maketitle
\section{Introduction}
Let $\mathcal A$ be a commutative Banach algebra, and let $\mathcal A$ be \emph{without order}, i.e., if $a \in \mathcal A$ and $a\mathcal A=\{0\}$, then $a=0$. A nonzero linear map $\varphi:\mathcal A \to \mathbb C$ is a \emph{complex homomorphism} if $\varphi(ab)=\varphi(a)\varphi(b)$ for all $a, b \in \mathcal A$. Let $\Phi_{\mathcal A}$ be the collection of all complex homomorphisms on $\mathcal A$. Then $\Phi_{\mathcal A}$ is a locally compact space with the weak$^\ast$- topology induced from the  weak topology on the dual $\mathcal A^\ast$ of $\mathcal A$. The space $\Phi_{\mathcal A}$ with this topology is the \emph{Gelfand space} of $\mathcal A$. For $a \in \mathcal A$, let $\widehat a:\Phi_{\mathcal A}\to \mathbb C$ be $\widehat a(\varphi)=\varphi(a)$ for all $\varphi \in \Phi_{\mathcal A}$. Then $\widehat a$ is the \emph{Gelfand transform} of $a$. Let $\widehat{A} = \left\lbrace \hat{a} : a \in A \right\rbrace $. By \cite[Theorem 2.2.7(i)]{E}, $\widehat a \in C_0(\Phi_{\mathcal A})$, the collection of all complex valued continuous function on $\Phi_{\mathcal A}$ vanishing at infinity. A commutative Banach algebra $\mathcal A$ is \emph{semisimple} if the map $a\mapsto \widehat a$ from $\mathcal A$ to $C_0(\Phi_{\mathcal A})$ is injective.

Let $\mathcal A$ be a commutative Banach algebra, and let $\mathcal A$ be without order. Clearly, if $\mathcal A$ is semisimple, then it is without order. A map $T:\mathcal A \to \mathcal A$ is a \emph{multiplier} if $T(ab)=aTb=(Ta)b$ for all $a, b \in \mathcal A$. Let $M(\mathcal A)$ be the collection of all multipliers on $\mathcal A$. Then $M(\mathcal A)$ is a closed subalgebra of $B(\mathcal A)$, the Banach algebra of all bounded linear maps from $\mathcal A$ to $\mathcal A$. The Banach algebra $M(\mathcal A)$ contains the identity function $I(a)=a\;(a\in \mathcal A)$ and $\mathcal A$ is an ideal in $M(\mathcal A)$ via the identification $a \mapsto T_a$, where $T_a(b)=ab$ for all $b \in \mathcal A$. If $T \in M(\mathcal A)$, then, by \cite[Theorem 1.2.2]{L}, there is a unique bounded continuous function $\sigma:\Phi_{\mathcal A}\to \mathbb C$ such that $\widehat{(Ta)}(\varphi)=\sigma(\varphi)\widehat a(\varphi)$ for all $\varphi \in \Phi_{\mathcal A}$ and $a \in \mathcal A$. We denote this $\sigma$ by $\widehat T$. Thus $\widehat{(Ta)}(\varphi)=\widehat T(\varphi)\widehat a(\varphi)$ for all $T \in M(\mathcal A)$, $\varphi \in \Phi_{\mathcal A}$ and $a \in \mathcal A$. Let $\widehat {M(\mathcal A)}=\widehat M(\mathcal A)=\{\widehat T:T \in M(\mathcal A)\}$.

Let $G$ be a locally compact abelian group with the Haar measure denoted by $ds$ or $dm$, and let the binary operation on $G$ be denoted by $+$. Let $$L^1(G)=\left\{f:G \to \mathbb C:f\text{ is measurable, }\|f\|_1=\int_G |f(s)|ds<\infty\right\}.$$ Then $L^1(G)$ is a Banach space with the norm $\|\cdot\|_1$. For $f, g \in L^1(G)$, let
\begin{equation}\label{convolution}
(f\star g)(s) = \int_G f(t) g(s-t) dt \quad(s \in G).
\end{equation}

Then $f\star g$ is the \emph{convolution} of $f$ and $g$, $f\star g \in L^1(G)$ and $\|f\star g\|_1 \leq \|f\|_1 \|g\|_1$. The Banach space $L^1(G)$ is, in fact, a commutative Banach algebra with convolution as a multiplication. A continuous map $\gamma:G \to \mathbb T$ is a \emph{character} on $G$ if $\gamma(s+t)=\gamma(s)\gamma(t)$ for all $s,t \in G$. Let $\widehat G$ be the collection of all characters on $G$. Then $\widehat G$ is a group with the binary operation $(\gamma_1+\gamma_2)(s)=\gamma_1(s)\gamma_2(s)$ for all $\gamma_1, \gamma_2 \in \widehat G$ and $s \in G$. If $\gamma$ is a character on $G$, then $\varphi_\gamma(f)=\int_G f(s)\overline{\gamma(s)}ds\;(f \in L^1(G))$ is a complex homomorphism on $L^1(G)$. By \cite[Theorem 2.7.2]{E}, if $\varphi$ is a complex homomorphism on $L^1(G)$, then there is a unique $\gamma \in \widehat G$ such that $\varphi=\varphi_\gamma$. Thus $\widehat G$ can be identified with $\Phi_{L^1(G)}$ via the map $\gamma \mapsto \varphi_\gamma$. The group $\widehat G$ with the Gelfand topology is a locally compact abelian group and is referred as the dual group of $G$. Let $d\gamma$ denote the Haar measure on $\widehat G$. The Haar measure of the dual group $\widehat G$ of $G$, is normalized so that, for a specific class of functions, the inversion formula holds. If $f \in L^1(G)$, then its \emph{Fourier transform} (Gelfand transform) is the function $\widehat f:\widehat G \to \mathbb C$ is given by $\widehat{f}(\varphi_{\gamma}) =  \widehat f(\gamma)=\int_G f(s)\overline{\gamma(s)}ds$ for all $\gamma \in \widehat G$ and $\widehat f \in C_0(\widehat G)=C_0(\Phi_{L^1(G)})$. Note that $L^1(G)$ is semisimple \cite[Corollary 2.7.9]{E}.

Let $M(G)$ be the collection of all complex regular Borel measures on $G$. Then $M(G)$ is a commutative Banach algebra with convolution of measures as multiplication and the total variation norm. The Gelfand space $\Phi_{M(G)}$ of $M(G)$ contains $\widehat G$. If $\mu \in M(G)$, then its \emph{Fourier - Stieltjes transform} (Gelfand transform) $\widehat \mu$, restricted to $\widehat G$, is given by $\widehat{\mu}(\varphi_{\gamma}) = \widehat \mu(\gamma)=\int_G \overline{\gamma(s)}d\mu(s)$ for all $\gamma \in \widehat G$. If $\mu \in M(G)$ and if $\widehat \mu(\gamma)=0$ for all $\gamma \in \widehat G$, then $\mu=0$, \cite[$\S$ 1.7.3(b)]{R}. If $\mu \in M(G)$, then $T_\mu$ defined by $T_\mu(f)=\mu \star f\;(f \in L^1(G))$ is a multiplier on $L^1(G)$, where $(\mu\star f)(s)=\int_G f(s-t)d\mu(t)$ for all $s \in G$. Also, $\|T_\mu\|=\|\mu\|$. Conversely, by \cite[Theorem 0.1.1 ($\romannumeral 5$)]{L}, if $T$ is a multiplier on $L^1(G)$, then there is a unique $\mu \in M(G)$ such that $T=T_\mu$. Thus $M(G)$ is isometrically isomorphic to the multiplier algebra $M(L^1(G))$ of $L^1(G)$. The space $L^1(G)$ is an ideal in $M(G)$ via the identification $f \mapsto T_f$, where $T_f(g)=f\star g\;(g \in L^1(G))$.

Let $X$ be a locally compact Hausdorff space, and let $C_0(X)$ be the collection of all complex valued continuous functions on $X$ which are vanishing at infinity. Then $C_0(X)$ is a Banach algebra with pointwise operations and the sup norm $\|f\|_\infty=\sup\{|f(x)|:x \in X\}$ for all $f \in C_0(X)$. By Riesz representation theorem the dual $C_0(X)^\ast$ of $C_0(X)$ can be identified with the Banach space $M(X)$ of all complex regular Borel measures on $X$ via the identification $\mu \mapsto L_\mu$, $M(X)\to C_0(X)^\ast$, where $L_\mu(f)=\int_X f(x)d\mu(x)$ for all $f \in C_0(X)$.

Let $G$ be a locally compact abelian group. A map $\omega:G \to [1,\infty)$ is a \emph{weight} \cite[Definition 1.6.1]{H} if $\omega$ is measurable, $\omega$ is locally bounded, i.e., $\omega$ is bounded on every compact subset of $G$, and $\omega(s+t)\leq \omega(s)\omega(t)$ for all $s,t \in G$. Let $$L^1(G,\omega)=\left\{f:G \to \mathbb C: f \text{ is measurable, }\|f\|_\omega=\int_G |f(s)|\omega(s)ds<\infty\right\}.$$ Then $L^1(G,\omega)$ is a commutative Banach algebra with the convolution multiplication and the norm $\|\cdot\|_\omega$. A continuous map $\gamma:G \to \mathbb C\setminus\{0\}$ is an \emph{$\omega$- bounded generalized character} if $\gamma(s+t)=\gamma(s)\gamma(t)$ for all $s, t \in G$ and $|\gamma(s)|\leq \omega(s)$ for all $s \in G$. Let $\widehat G_\omega$ be the collection of all $\omega$- bounded generalized characters on $G$. If $\gamma \in \widehat G_\omega$, then $\varphi_\gamma$ is a complex homomorphism on $L^1(G,\omega)$, where $\varphi_\gamma(f)=\int_G f(s)\overline{\gamma(s)}ds$ for all $f \in L^1(G,\omega)$. Conversely, by \cite[Theorem 2.8.2.]{E}, if $\varphi$ is a complex homomorphism on $L^1(G,\omega)$, then there is a unique $\gamma \in \widehat G_\omega$ such that $\varphi=\varphi_\gamma$. This establishes an one to one correspondence between $\widehat G_\omega$ and $\Phi_{L^1(G,\omega)}$. We shall consider $\widehat G_\omega$ with the Gelfand topology on it. If $f \in L^1(G,\omega)$, then, as in the case of $L^1(G)$, the \emph{Fourier transform} $\widehat f:\widehat G_\omega \to \mathbb C$ is defined as $\widehat{f}(\varphi_{\gamma}) = \widehat f(\gamma)=\int_G f(s)\overline{\gamma(s)}ds$ for all $\gamma \in \widehat G_\omega$. Since $\omega\geq 1$, we have $L^1(G,\omega)\subseteq L^1(G)$ and $\widehat G \subseteq \widehat G_\omega$. Let $f \in L^1(G,\omega)$. Since $L^1(G,\omega)\subset L^1(G)$, we have $\widehat f_{|_{\widehat G}}\in C_0(\widehat G)$.

Let $M(G,\omega)$ be the collection of all regular complex Borel measure $\mu$ on $G$ such that $\|\mu\|_\omega=\int_G \omega(s)d|\mu|(s)<\infty$. Then $M(G,\omega)$ is a commutative Banach algebra with the convolution of measures and the norm $\|\cdot\|_\omega$. Note that $\widehat G_\omega\subset \Phi_{M(G,\omega)}$. If $\mu \in M(G,\omega)$, then the \emph{Gelfand transform} $\widehat \mu$ of $\mu$ restricted to $\widehat G_\omega$ is given by $\widehat{\mu}(\varphi_{\gamma}) = \widehat \mu(\gamma)=\int_G \overline{\gamma(s)}d\mu(s)$ for all $\gamma \in \widehat G_\omega$. If $\mu \in M(G,\omega)$, then $T_\mu$ is a multiplier on $L^1(G,\omega)$, where $T_\mu(f)=\mu\star f$ for all $f \in L^1(G,\omega)$ and conversely, by \cite[Lemma 2.3]{G}, if $T$ is a multiplier on $L^1(G,\omega)$, then there is a unique $\mu \in M(G,\omega)$ such that $T=T_\mu$. Also, $\|T_\mu\|=\|\mu\|_\omega$ for all $\mu \in M(G,\omega)$. Thus the multiplier algebra $M(L^1(G,\omega))$ of $L^1(G,\omega)$ can be identified with $M(G,\omega)$.

Let $\omega$ be a weight on a locally compact abelian group $G$, and let $C_0(G,\omega^{-1})$ be the collection of all complex valued continuous functions $f$ on $G$ such that $f\omega^{-1}$ is vanishing at infinity. Then $C_0(G,\omega^{-1})$ is a Banach space with pointwise operations and the norm $$\|f\|_{0,\omega^{-1}}=\sup\left\{\frac{|f(s)|}{\omega(s)}:s \in G\right\}\quad(f \in C_0(G,\omega^{-1})).$$ Using Riesz representation theorem, we have $C_0(G,\omega^{-1})^\ast$ is isometrically isomorphic to $M(G,\omega)$. An isometric isomorphism from $M(G,\omega)$ to $C_0(G.\omega^{-1})^\ast$ may be given by $\mu \mapsto T_\mu$, where $T_\mu(f)=\int_G f(s)d\mu(s)$ for all $f \in C_0(G,\omega^{-1})$.

Let $\mathcal A$ be a commutative Banach algebra, and let $\mathcal A$ be without order. A bounded continuous function $\sigma:\Phi_{\mathcal A} \to \mathbb C$ is a \emph{BSE- function} \cite{T} if there exists a positive constant $C$ such that $$\left|\sum_{i=1}^n c_i \sigma(\varphi_i)\right|\leq C\left\|\sum_{i=1}^n c_i\varphi_i\right\|_{\mathcal A^\ast}$$ for all $c_1, \ldots, c_n \in \mathbb C$, $\varphi_1,\ldots,\varphi_n \in \Phi_{\mathcal A}$ and $n \in \mathbb N$. The smallest such $C$ is the \emph{BSE- norm} of $\sigma$ and will be denoted by $\|\sigma\|_{\text{BSE}}$. Let $C_{\text{BSE}}(\Phi_{\mathcal A})$ be the collection of all BSE- functions. Then $(C_{\text{BSE}}(\Phi_{\mathcal A}),\|\cdot\|_{\text{BSE}})$ is a commutative Banach algebra with pointwise operations \cite[Lemma 1]{T}. A commutative Banach algebra $\mathcal A$, which is without order, is a \emph{BSE- algebra} \cite{T} if $C_{\text{BSE}}(\Phi_{\mathcal A})=\widehat{M(\mathcal A)}$. The concept of BSE- algebra was introduced and studied by Takahasi and Hatori in  \cite{T} and subsequently for various Banach algebras \cite{P, I2, I, K2, Y}. The abbreviation BSE stands for Bochner-Schoenberg-Eberlein and refers to the classical theorem obtained by Bochner \cite{B} on the real line and an integral analogue by Schohenber \cite{S} and by  Eberlein \cite{E2} for a general locally compact abelian group $G$, stating that the group algebra $L^1(G)$ is a BSE- algebra (see also \cite[$\S$ 1.9.1]{T}).

Let $\mathcal A$ be a commutative Banach algebra, and let it be without order. Let $\text{span}(\Phi_{\mathcal A})$ be the linear span of $\Phi_{\mathcal A}$ in $\mathcal A^\ast$, and let $\mathcal K(\Phi_{\mathcal A})$ be the collection of all compact subsets of $\Phi_{\mathcal A}$. If $p \in \text{span}(\Phi_{\mathcal A})$, then $p$ can be uniquely represented as $p=\sum_{\varphi \in \Phi_{\mathcal A}}\widehat p(\varphi)\varphi$, where $\widehat p$ is a complex valued functions on $\Phi_{\mathcal A}$ with finite support. For a function $\sigma \in C_{BSE}(\Phi_{\mathcal A})$, define
$$\|\sigma \|_{BSE, \infty} =\inf_{K \in \mathcal K(\Phi_{\mathcal A})}\|\sigma\|_{BSE,K},$$ where, $$\|\sigma\|_{BSE,K}=\sup_{p\in \text{span}(\Phi_{A})}\left\lbrace \left|\sum_{\varphi \in \Phi_{A}} \widehat{p}(\varphi)\sigma(\varphi) \right|:\|p\|_{\mathcal A^{\ast}} \leq 1, \, \widehat{p}(\varphi) = 0\; (\varphi \in K) \right\rbrace.$$ Then by \cite[Corollary 3.9]{I} $C_{BSE}^{0}(\Phi_{\mathcal A}) =\{ \sigma \in C_{BSE}(\Phi_{\mathcal A}) : \| \sigma \|_{BSE, \infty} = 0 \}$ is a closed ideal in $(C_{BSE}(\Phi_{\mathcal A}),\|\cdot\|_{BSE})$. This ideal is referred as the \emph{Inoue-Doss ideal} associated with $\mathcal{A}$ \cite{Z}.  A commutative Banach algebra $\mathcal A$ is a \emph{BED- algebra} (Bochner - Eberlein - Doss- algebra) if $\widehat{\mathcal A} = C_{BSE}^{0}(\Phi_{A})$. This concept was introduced and studied by Takahasi and Inoue in \cite{I}. Doss in \cite[Theorem 2]{R2} gave the characterization of an absolutely continuous measures on $G$ with respect to the Haar measure. Later, Takahasi and Inoue generalized the same concept for any commutative Banach algebra in \cite{I} by defining BED- algebras.    

As said above, if $G$ is a locally compact abelian group, then $L^1(G)$ is both BSE- algebra and BED- algebra. The main results of the paper is as follows.

\begin{theorem}\label{bse}
If $\omega$ is a weight on a locally compact abelian group $G$ such that $\omega^{-1}$ is vanishing at infinity, then $L^1(G,\omega)$ is a BSE- algebra.
\end{theorem}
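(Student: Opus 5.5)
The plan is to prove both inclusions $\widehat{M(L^1(G,\omega))}\subseteq C_{\text{BSE}}(\Phi_{L^1(G,\omega)})$ and the reverse. Throughout we identify $M(L^1(G,\omega))$ with $M(G,\omega)$ via \cite[Lemma 2.3]{G}, and $\Phi_{L^1(G,\omega)}$ with $\widehat G_\omega$.

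\textbf{Easy inclusion.} Let $\mu\in M(G,\omega)$, and take $c_i\in\mathbb C$ and $\gamma_i\in\widehat G_\omega$. Then
$$\Big|\sum_i c_i\widehat\mu(\gamma_i)\Big|=\Big|\int_G \sum_i c_i\overline{\gamma_i(s)}\,d\mu(s)\Big|\le \|\mu\|_\omega \sup_s \frac{|\sum_i c_i\overline{\gamma_i(s)}|}{\omega(s)}.$$
The function $\sum_i c_i\overline{\gamma_i}$ is in $L^\infty(G,\omega^{-1})=L^1(G,\omega)^\ast$, and its norm there is exactly $\|\sum c_i\varphi_{\gamma_i}\|_{\mathcal A^\ast}$. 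Since the function is continuous, the essential supremum equals the supremum. Hence $\widehat\mu$ is a BSE function with $\|\widehat\mu\|_{\text{BSE}}\le\|\mu\|_\omega$. Continuity and boundedness of $\widehat\mu$ on $\widehat G_\omega$ are standard.

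\textbf{Reverse inclusion.} Let $\sigma\in C_{\text{BSE}}(\widehat G_\omega)$. Consider the subspace $V=\text{span}\{\overline{\gamma}:\gamma\in\widehat G_\omega\}$ of continuous functions, and define $\Lambda(\sum c_i\overline{\gamma_i})=\sum c_i\sigma(\gamma_i)$. This is well defined by the BSE inequality: if the function vanishes, then its dual norm is $0$, so the value is $0$. It is bounded by $\|\sigma\|_{\text{BSE}}$ in the norm $\|\cdot\|_{0,\omega^{-1}}$. Here the hypothesis $\omega^{-1}\in C_0(G)$ enters. Each $\gamma\in\widehat G$ is bounded, so $\overline\gamma\,\omega^{-1}\in C_0(G)$. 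Hence at least the span of unitary characters lies in $C_0(G,\omega^{-1})$.

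To handle general $\gamma\in\widehat G_\omega$, I would restrict $\Lambda$ to $V_0=\text{span}\{\overline\gamma:\gamma\in\widehat G\}\subseteq C_0(G,\omega^{-1})$. By Hahn--Banach, extend it to a functional on $C_0(G,\omega^{-1})$ of norm at most $\|\sigma\|_{\text{BSE}}$. By the Riesz identification $C_0(G,\omega^{-1})^\ast\cong M(G,\omega)$, we obtain $\mu\in M(G,\omega)$ with $\widehat\mu(\gamma)=\sigma(\gamma)$ for all $\gamma\in\widehat G$.

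\textbf{Main obstacle.} It remains to show $\widehat\mu=\sigma$ on all of $\widehat G_\omega$, not only on $\widehat G$. I plan to use that $\tau=\sigma-\widehat\mu|_{\widehat G_\omega}$ is itself a BSE function, by the easy inclusion and the algebra structure of $C_{\text{BSE}}$, and that $\tau$ vanishes on $\widehat G$. Take $f\in L^1(G,\omega)$. Then $\tau\widehat f$ is again BSE, and I would like to show it is the transform of an element of $L^1(G,\omega)$, or at least that $\tau\cdot\widehat f=0$. Consider a nonunitary $\gamma_0\in\widehat G_\omega$, so $|\gamma_0|\not\equiv1$. Since $\omega^{-1}\to0$, the function $|\gamma_0|/\omega$ is bounded by $1$ and is not $\equiv 1$ at infinity. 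I would try to approximate the functional $\varphi_{\gamma_0}$, in the weak$^\ast$ sense with controlled dual norm, by combinations of unitary characters. Concretely, $\overline{\gamma_0}\,\omega^{-1}$ should lie in the closure of $V_0$ in $C_0(G,\omega^{-1})$. Then the BSE inequality forces $\tau(\gamma_0)=0$.

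The density claim is what I expect to be hardest. I would attempt it via the Stone--Weierstrass/Fourier density of $V_0\,\omega^{-1}$ in $C_0(G)$, after multiplying by a cutoff. Uniform approximation of $\overline{\gamma_0}$ on a large compact set by trigonometric polynomials with bounded sup norm is possible when $\gamma_0$ is bounded on that set. Off the compact set, the condition $\omega^{-1}\to 0$ controls the error, provided $|\gamma_0|/\omega$ is small there. If that fails, I would instead argue that under $\omega^{-1}\in C_0$ one has $\widehat G_\omega$ with $\widehat G$ dense in the relevant sense, and conclude by continuity of $\tau$.
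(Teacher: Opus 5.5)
Your easy inclusion is fine, and your Hahn--Banach step is correct and more direct than the paper's route. Extending $\Lambda$ from $V_0=\text{span}\{\overline{\gamma}:\gamma\in\widehat G\}\subseteq C_0(G,\omega^{-1})$ and using $C_0(G,\omega^{-1})^\ast\cong M(G,\omega)$ gives $\mu\in M(G,\omega)$ with $\widehat\mu=\sigma$ on $\widehat G$ in one stroke. The paper instead applies Bochner--Eberlein for $L^1(G)$ to get $\mu\in M(G)$, then proves $\mu\in M(G,\omega)$ using the class $\mathscr F$ and the integrated BSE inequality of Lemma \ref{lemm1}.

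\textbf{The gap.} The gap is the step you call the main obstacle. You do not resolve it, and both routes you suggest fail as stated.
\begin{enumerate}
\item Approximating $\overline{\gamma_0}$ by elements of $V_0$ in $\|\cdot\|_{0,\omega^{-1}}$ requires $\overline{\gamma_0}\in C_0(G,\omega^{-1})$, which is false in general. Take $G=\mathbb Z$ and $\omega(n)=e^{|n|}$. Then $\widehat G_\omega$ is the annulus $e^{-1}\le|z|\le e$, and $\gamma_0(n)=e^{n}$ has $|\gamma_0(n)|/\omega(n)=1$ for all $n\ge 0$.
\item Your fallback is false: $\widehat G=\mathbb T$ is a closed, nowhere dense subset of that annulus.
\item The density of $V_0$ in $C_0(G,\omega^{-1})$ is not established by your Stone--Weierstrass/cutoff sketch. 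Nothing controls $\sup_G|P|$ against the growth of $|\gamma_0|$ on large compacta.
\end{enumerate}
The paper's own proof also ends once it has $\mu\in M(G,\omega)$ with $\widehat\mu=\sigma$ on $\widehat G$, and never treats $\widehat G_\omega\setminus\widehat G$. So you have located a step that genuinely needs an argument.

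\textbf{How to close it.} Three steps suffice.
\begin{enumerate}
\item $V_0$ is dense in $C_0(G,\omega^{-1})$ by duality. A functional annihilating $V_0$ is given by some $\nu\in M(G,\omega)\subseteq M(G)$ with $\widehat\nu=0$ on $\widehat G$, so $\nu=0$ by uniqueness of Fourier--Stieltjes transforms.
\item Fix $\gamma\in\widehat G_\omega$ and $0<t<1$, and put $\gamma_t=\gamma|\gamma|^{t-1}$. This is a continuous homomorphism with $|\gamma_t|=|\gamma|^t\le\max(1,|\gamma|)\le\omega$, so $\gamma_t\in\widehat G_\omega$. Also $|\gamma_t|/\omega\le\omega^{t-1}=(\omega^{-1})^{1-t}$, which vanishes at infinity. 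Hence $\overline{\gamma_t}\in C_0(G,\omega^{-1})$, and there are $p_k=\sum_i c^{(k)}_i\overline{\zeta^{(k)}_i}\in V_0$ with $\|\overline{\gamma_t}-p_k\|_{0,\omega^{-1}}\to0$. Apply the BSE inequality for $\tau=\sigma-\widehat\mu$ to $\varphi_{\gamma_t}-\sum_i c^{(k)}_i\varphi_{\zeta^{(k)}_i}$, whose dual norm is exactly $\|\overline{\gamma_t}-p_k\|_{0,\omega^{-1}}$. Since $\tau=0$ on $\widehat G$, this gives $|\tau(\gamma_t)|\le\|\tau\|_{\text{BSE}}\|\overline{\gamma_t}-p_k\|_{0,\omega^{-1}}\to0$.
\item For every $f\in L^1(G,\omega)$, $\varphi_{\gamma_t}(f)\to\varphi_\gamma(f)$ as $t\to1^-$ by dominated convergence with dominant $|f|\omega$. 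So $\gamma_t\to\gamma$ in the Gelfand topology, and continuity of $\tau$ gives $\tau(\gamma)=0$.
\end{enumerate}
Hence $\sigma=\widehat\mu$ on all of $\widehat G_\omega$.
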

\begin{theorem}\label{bed}
If $\omega$ is a weight on a locally compact abelian group $G$ such that $\omega^{-1}$ is vanishing at infinity, then $L^1(G,\omega)$ is a BED- algebra.
\end{theorem}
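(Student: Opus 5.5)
The plan is to prove the two inclusions $\widehat{L^1(G,\omega)}\subseteq C^0_{BSE}(\widehat G_\omega)$ and $C^0_{BSE}(\widehat G_\omega)\subseteq \widehat{L^1(G,\omega)}$ separately. The reverse inclusion will be reduced, via Theorem \ref{bse}, to the classical theorem of Doss that $L^1(G)$ is a BED- algebra. Throughout we write $\mathcal A=L^1(G,\omega)$ and identify $\Phi_{\mathcal A}$ with $\widehat G_\omega$.

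\emph{The inclusion $C^0_{BSE}\subseteq \widehat{\mathcal A}$.} Let $\sigma\in C^0_{BSE}(\widehat G_\omega)$.
\begin{enumerate}
\item Since $C^0_{BSE}\subseteq C_{BSE}$, Theorem \ref{bse} together with the identification $M(\mathcal A)\cong M(G,\omega)$ gives $\mu\in M(G,\omega)$ with $\sigma=\widehat\mu$ on $\widehat G_\omega$.
\item For $\gamma_1,\dots,\gamma_n\in\widehat G$ and $c_i\in\mathbb C$, the dual of $L^1(G,\omega)$ is $L^\infty(G,\omega^{-1})$. Since $\omega\ge 1$, this gives
\[
\Bigl\|\sum_i c_i\varphi_{\gamma_i}\Bigr\|_{\mathcal A^\ast}=\operatorname{ess\,sup}_{s}\frac{\bigl|\sum_i c_i\overline{\gamma_i(s)}\bigr|}{\omega(s)}\le \Bigl\|\sum_i c_i\varphi_{\gamma_i}\Bigr\|_{L^1(G)^\ast}.
\]
\item Next I check that the Gelfand topology of $\widehat G_\omega$ restricted to $\widehat G$ is the usual topology of $\widehat G$. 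This holds because $L^1(G,\omega)$ is dense in $L^1(G)$ and the transforms are uniformly bounded on bounded sets. Consequently, for compact $K\subseteq\widehat G_\omega$ the set $K\cap\widehat G$ is compact in $\widehat G$; here I use that $\widehat G$ is closed in $\widehat G_\omega$, being the set where $|\gamma|=1$.
\item Combining steps 2 and 3, $\sigma|_{\widehat G}$ is a BSE- function for $L^1(G)$. Moreover, every $p\in\operatorname{span}(\widehat G)$ with $\widehat p=0$ on $K\cap\widehat G$ satisfies $\|p\|_{\mathcal A^\ast}\le\|p\|_{L^1(G)^\ast}$. Hence $\|\sigma|_{\widehat G}\|_{BSE,K\cap\widehat G}$ computed for $L^1(G)$ is at most $\|\sigma\|_{BSE,K}$ computed for $\mathcal A$. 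Taking the infimum over $K$ gives $\|\sigma|_{\widehat G}\|_{BSE,\infty}=0$.
\item By Doss's theorem ($L^1(G)$ is BED) there is $h\in L^1(G)$ with $\widehat h=\sigma|_{\widehat G}=\widehat\mu|_{\widehat G}$. Uniqueness of Fourier--Stieltjes transforms \cite[\S 1.7.3(b)]{R} then gives $\mu=h\,dm$.
\item Since $\mu\in M(G,\omega)$, we get $\int|h|\omega\,dm=\|\mu\|_\omega<\infty$, so $h\in\mathcal A$. Finally $\widehat h=\widehat\mu=\sigma$ on all of $\widehat G_\omega$.
\end{enumerate}

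\emph{The inclusion $\widehat{\mathcal A}\subseteq C^0_{BSE}$.} I expect this to be the main obstacle, since the BSE norm is not the sup norm and smallness of $\widehat f$ off a compact set does not directly control $\|\widehat f\|_{BSE,K}$. My first move is to invoke the general result of Takahasi--Inoue \cite{I}, which states that $\widehat{\mathcal A}\subseteq C^0_{BSE}(\Phi_{\mathcal A})$ for every commutative Banach algebra without order.

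If a self-contained argument is preferred, I would proceed as follows.
\begin{enumerate}
\item Note that $\|\widehat f\|_{BSE,\infty}\le\|\widehat f\|_{BSE}\le\|f\|_\omega$ and that $\{\sigma:\|\sigma\|_{BSE,\infty}=0\}$ is closed (\cite[Corollary 3.9]{I}). It therefore suffices to treat $f$ in a dense subset of $\mathcal A$.
\item Take the dense subset $\{f\star g:\ g\in\mathcal A\}$ and estimate $|p(f\star g)|=|\langle g, f\cdot p\rangle|$, where $f\cdot p=\sum\widehat p(\varphi)\widehat f(\varphi)\varphi$.
\item Control this using $|\widehat f|<\varepsilon$ off a compact $K$ and the fact that $\widehat p$ vanishes on $K$.
\end{enumerate}
This last estimate is where care is needed, and where I would rely on the cited general theorem if the direct estimate does not close.
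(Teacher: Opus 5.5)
Your proof of $C^0_{BSE}(\widehat G_\omega)\subseteq\widehat{L^1(G,\omega)}$ is sound and follows essentially the paper's route. You use Theorem~\ref{bse} to write $\sigma=\widehat\mu$ with $\mu\in M(G,\omega)$. You then use $\|\cdot\|_{L^1(G,\omega)^\ast}\le\|\cdot\|_{L^1(G)^\ast}$ on $\operatorname{span}(\widehat G)$ to transfer the condition $\|\sigma\|_{BSE,\infty}=0$ down to $\widehat G$, and finally appeal to Doss. The only difference is that you invoke ``$L^1(G)$ is BED'' as a black box, where the paper runs the Lebesgue decomposition of $\mu$ through Theorems~\ref{TR1} and~\ref{TR2}; this is a repackaging, not a different idea.

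The genuine gap is the reverse inclusion $\widehat{L^1(G,\omega)}\subseteq C^0_{BSE}$. The general result you attribute to Takahasi--Inoue is false as stated. The standard argument, approximating $f$ by elements whose transforms have compact support, works only when such elements are dense (e.g.\ for polynomial weights). In general the inclusion fails, even inside the class covered by the statement.

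Take $G=\mathbb R$, $\omega(x)=e^{|x|}$, and write $\mathcal A=L^1(\mathbb R,\omega)$. Then $\Phi_{\mathcal A}$ is the closed strip $S=\{c:|\operatorname{Re}c|\le 1\}$ via $\varphi_c(f)=\int f(x)e^{cx}\,dx$, with the Euclidean topology. Hence compact subsets of $\Phi_{\mathcal A}$ are bounded subsets of $S$. Moreover, $\|\sum_j a_j\varphi_{c_j}\|_{\mathcal A^\ast}=\sup_x e^{-|x|}\,|\sum_j a_je^{c_jx}|$.

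For each $n$, let $V_n$ be the span of $\{e^{cx}:|\operatorname{Re}c|<1,\ \operatorname{Im}c>n\}$. It lies in $C_0(\mathbb R,\omega^{-1})$ and is dense there. Indeed, if $\mu\in M(\mathbb R,\omega)$ annihilates $V_n$, then $c\mapsto\int e^{cx}\,d\mu(x)$ is holomorphic on the open strip and vanishes on an open subset. Hence it vanishes on $i\mathbb R$, so $\mu=0$.

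Since $C_0(\mathbb R,\omega^{-1})$ norms $\mathcal A$, for every compact $K$ and every $f\in\mathcal A$ you can find $p\in\operatorname{span}(\Phi_{\mathcal A})$ with $\widehat p=0$ on $K$, $\|p\|_{\mathcal A^\ast}\le 1$, and $|p(f)|$ arbitrarily close to $\|f\|_\omega$. Thus $\|\widehat f\|_{BSE,\infty}=\|f\|_\omega$ and $\widehat{\mathcal A}\cap C^0_{BSE}(\Phi_{\mathcal A})=\{0\}$, so $L^1(\mathbb R,e^{|x|})$ is not a BED-algebra.

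Consequences for your proposal:
\begin{itemize}
\item Your self-contained sketch (step 3) cannot be completed. Smallness of $\widehat f$ off $K$ controls nothing here, because far-out characters span a dense set by analytic continuation.
\item The statement cannot be proved without an extra hypothesis, such as density of elements with compactly supported Gelfand transform (the Tauberian property).
\item The paper's own proof has the same hole. Both of its cases only establish $C^0_{BSE}\subseteq\widehat{L^1(G,\omega)}$ and never address the reverse inclusion.
\end{itemize}
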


\begin{remark}
If $\omega$ is a weight on a locally compact abelian group $G$, then by, \cite[Theorem 3.7.5]{H}, there is a continuous weight $\omega'$ on $G$ such that $L^1(G,\omega)$ and $L^1(G,\omega')$ are isomorphic as Banach algebras and hence $L^1(G,\omega)$ is a BSE (BED)- algebra if and only if $L^1(G,\omega')$ is a BSE (BED)- algebra. Thus to prove Theorem \ref{bse} and Theorem \ref{bed} we may assume that $\omega$ is continuous.
\end{remark}
So, all the weights in the paper are assumed to be continuous.
\section{Proofs}
First we note the following result.
\begin{theorem}\cite[$\S$ 1.2.6 Theorem(b)]{R}\label{1}
 If $K$ is a compact subset of $G$, $r>0$ and $U_{r} = \{z\in \mathbb C : |1-z|<r\}$, then $N(K,r) = \{\gamma \in \widehat G: \gamma(s) \in U_{r} \text{ for all } s \in K\}$ is open in $\widehat G$.
\end{theorem}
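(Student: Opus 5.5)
This is a classical fact about the topology of the dual group; I will derive it from continuity properties of the pairing $G\times\widehat G\to\mathbb T$ together with the compactness of $K$. The Gelfand topology on $\widehat G$ coincides with the compact-open topology, i.e. the topology of uniform convergence on compact subsets of $G$. Granting that, the proof reduces to showing that $N(K,r)$ is open in the compact-open topology, and the key input is joint continuity of the evaluation map $(s,\gamma)\mapsto\gamma(s)$ on $G\times\widehat G$.

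Fix $\gamma_0\in N(K,r)$. The function $s\mapsto|1-\gamma_0(s)|$ is continuous on the compact set $K$, so it attains its maximum $m$, and $m<r$. Set $\varepsilon=r-m>0$. I then want a neighbourhood $V$ of $\gamma_0$ in $\widehat G$ with $\sup_{s\in K}|\gamma(s)-\gamma_0(s)|<\varepsilon$ for every $\gamma\in V$. The triangle inequality then gives $|1-\gamma(s)|\le|1-\gamma_0(s)|+|\gamma_0(s)-\gamma(s)|<m+\varepsilon=r$ for all $s\in K$, so $V\subseteq N(K,r)$.

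To construct $V$ directly from the Gelfand (weak$^\ast$) topology, I will use the standard argument. Choose $f\in L^1(G)$ with $\widehat f(\gamma_0)\neq 0$. Then the identity
\[
\gamma(s)\widehat f(\gamma)=\int_G f(t+s)\overline{\gamma(t)}\,dt=\widehat{f_{-s}}(\gamma),
\]
where $f_{-s}(t)=f(t+s)$, shows that $(s,\gamma)\mapsto\gamma(s)$ is continuous near $\{\gamma_0\}\times G$. Indeed, $s\mapsto f_{-s}$ is continuous into $L^1(G)$, the Gelfand transforms are uniformly bounded by the $L^1$ norm, and each $\widehat{f_{-s}}$ is weak$^\ast$ continuous. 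Hence, for each $s_0\in K$, there are neighbourhoods $W_{s_0}$ of $s_0$ and $V_{s_0}$ of $\gamma_0$ such that $|\gamma(s)-\gamma_0(s)|<\varepsilon$ for all $s\in W_{s_0}$ and $\gamma\in V_{s_0}$. Cover $K$ by finitely many of the sets $W_{s_0}$ and intersect the corresponding sets $V_{s_0}$ to obtain $V$.

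The main obstacle is this joint-continuity and equicontinuity step, namely obtaining the estimate uniformly in $s$ and $\gamma$ simultaneously. To handle it, I would expand $\gamma(s)-\gamma_0(s)$ as $\bigl(\widehat{f_{-s}}(\gamma)-\widehat{f_{-s_0}}(\gamma)\bigr)/\widehat f(\gamma)$ plus terms whose size is controlled by $\|f_{-s}-f_{-s_0}\|_1$ and by $|\widehat f(\gamma)-\widehat f(\gamma_0)|$. The remaining steps are routine.
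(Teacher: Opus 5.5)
The paper gives no proof of this statement (it is quoted from Rudin's \emph{Fourier Analysis on Groups}, \S1.2.6), and your argument is correct and is essentially Rudin's own: you get joint continuity of $(s,\gamma)\mapsto\gamma(s)$ from $\gamma(s)\widehat f(\gamma)=\widehat{f_{-s}}(\gamma)$ with $\widehat f(\gamma_0)\neq 0$, and then use a finite-subcover argument over the compact set $K$. The only thing to drop is the opening appeal to the coincidence of the Gelfand and compact-open topologies: your direct argument never uses it, and since that coincidence is normally derived from this very theorem, relying on it would be circular.
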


\begin{theorem}\label{thm1}
Let $\omega$ be a weight on a locally compact abelian group $G$ such that $\omega^{-1}$ is vanishing at infinity. Define $\Psi : \widehat G \rightarrow {L^{1}(G,\omega)}^{\ast}$ by $\Psi(\gamma) = \varphi_{\gamma}$ for all $\gamma \in \widehat G$. Then $\Psi$ is uniformly continuous on $\widehat G$.
\end{theorem}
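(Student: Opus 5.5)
We need to show: for every $\epsilon>0$ there is a neighbourhood $N$ of $0$ in $\widehat G$ (the identity character) such that $\|\varphi_{\gamma_1}-\varphi_{\gamma_2}\|_{L^1(G,\omega)^\ast}<\epsilon$ whenever $\gamma_1-\gamma_2\in N$. The dual of $L^1(G,\omega)$ is $L^\infty(G,\omega^{-1})$, so the first step is to identify the norm of a functional $\varphi_\gamma-\varphi_\eta$. For $f\in L^1(G,\omega)$ with $\|f\|_\omega\le 1$,
$$|\varphi_{\gamma}(f)-\varphi_{\eta}(f)|\le \int_G |f(s)|\omega(s)\frac{|\gamma(s)-\eta(s)|}{\omega(s)}\,ds\le \sup_{s\in G}\frac{|\gamma(s)-\eta(s)|}{\omega(s)}.$$
Since $|\gamma(s)|=|\eta(s)|=1$ on $G$, we have $|\gamma(s)-\eta(s)|=|1-(\overline{\eta}\gamma)(s)|=|1-(\gamma-\eta)(s)|$, with the group operation of $\widehat G$. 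It therefore suffices to make $\sup_s |1-\chi(s)|/\omega(s)$ small, where $\chi=\gamma-\eta$ ranges over a suitable neighbourhood of the identity. This gives uniform continuity, because the estimate depends only on the difference $\gamma-\eta$.

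Fix $\epsilon>0$. The key step is to split $G$ using the hypothesis that $\omega^{-1}$ vanishes at infinity. First choose a compact set $K\subseteq G$ with $\omega(s)^{-1}<\epsilon/2$ for $s\notin K$. Off $K$ we have the trivial bound $|1-\chi(s)|/\omega(s)\le 2\omega(s)^{-1}<\epsilon$. On $K$ we use $\omega\ge 1$, so it suffices to have $|1-\chi(s)|<\epsilon$ for all $s\in K$. By Theorem \ref{1}, the set $N(K,\epsilon)=\{\chi\in\widehat G:\chi(s)\in U_\epsilon \text{ for all } s\in K\}$ is open in $\widehat G$. It clearly contains the trivial character, so it is a neighbourhood of the identity.

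Combining these estimates, whenever $\gamma-\eta\in N(K,\epsilon)$ we obtain
$$\|\Psi(\gamma)-\Psi(\eta)\|\le \sup_{s\in G}\frac{|1-(\gamma-\eta)(s)|}{\omega(s)}\le\epsilon.$$
This is exactly uniform continuity of $\Psi$ with respect to the uniform structure of the group $\widehat G$.

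The only point needing care is the step that requires the hypothesis. Without $\omega^{-1}\to 0$, the tail of $G$ could not be controlled: for the trivial weight, $\Psi$ is not even norm continuous. The rest is routine. Continuity of the weight is not actually needed here; we only use $\omega\ge 1$ and that $\omega^{-1}$ vanishes at infinity.
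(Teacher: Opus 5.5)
Your proof is correct and takes essentially the same route as the paper: choose a compact $K$ off which $\omega^{-1}$ is small, bound the contribution from $K^c$ using $|\gamma_1|+|\gamma_2|=2$, and control the contribution from $K$ with the open set $N(K,\cdot)$ of Theorem~\ref{1}. Your reduction to the difference character $\gamma-\eta$ is slightly sharper than the paper's argument. The paper only shows $\|\Psi(\gamma_1)-\Psi(\gamma_2)\|<\epsilon$ for $\gamma_1,\gamma_2$ both lying in a neighbourhood of the identity, and leaves implicit the translation-invariance step needed for genuine uniform continuity.
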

\begin{proof}
Let $\epsilon > 0$. Then, by assumption, there exists a compact set $K$ of $G$ such that $\frac{1}{\omega(s)} < \frac{\epsilon}{4}$ for all $s \in K^{c}$. Now, let $U = N(K,\frac{\epsilon}{4})$. Then, by Theorem \ref{1}, $U$ is open in $\widehat G$ and the identity function $\mathbf 1_{\widehat G}$ is in $U$, where $\mathbf 1_{\widehat G}(s)=1$ for all $s \in G$. Let $f \in L^{1}(G,\omega)$ with $\|f\|_\omega\leq 1$, and let $\gamma_{1}, \gamma_{2} \in U$. Then
\begin{eqnarray*}
| \varphi_{\gamma_{1}}(f) - \varphi_{\gamma_{2}}(f)| & = &\left|  \int_{G} f(s)\overline{\gamma_{1}(s)}ds - \int_{G} f(s)\overline{\gamma_{2}(s)}ds \right |\\
& = & \left | \int_{G} f(s)(\overline{\gamma_{1}(s)} - \overline{\gamma_{2}(s)})ds \right|\\
& = & \left |\int_{K} f(s)(\overline{\gamma_{1}(s)} - \overline{\gamma_{2}(s)})ds + \int_{K^{c}} f(s)(\overline{\gamma_{1}(s)} - \overline{\gamma_{2}(s)}) ds \right|\\
& \leq &  \int_{K} |f(s)| |\gamma_{1}(s) - \gamma_{2}(s)|ds + \int_{K^{c}} |f(s)||\gamma_{1}(s) - \gamma_{2}(s)| ds \\
& \leq & \int_{K} |f(s)| |\gamma_{1}(s) - 1| + \int_{K} |f(s)| |1 - \gamma_{2}(s)|ds\\
&& + \int_{K^{c}} |f(s)| (|\gamma_{1}(s)| + |\gamma_{2}(s)|) \frac{\omega(s)}{\omega(s)} ds \\
& \leq & \int_{K} |f(s)| \frac{\epsilon}{4} ds + \int_{K} |f(s)| \frac{\epsilon}{4} ds + \int_{K^{c}} |f(s)|2 \frac{\omega(s)}{\omega(s)} ds \\
& \leq & \frac{\epsilon}{2} \int_{K} |f(s)|  ds +  2 \frac{\epsilon}{4} \int_{K^{c}} |f(s)|\omega(s)ds \\
& \leq & \frac{\epsilon}{2} \int_{K} |f(s)|\omega(s)  ds +  \frac{\epsilon}{2}\int_{K^{c}} |f(s)|\omega(s)ds \\
& = & \frac{\epsilon}{2} \int_{G} |f(s)|\omega(s)  ds=\frac{\epsilon}{2}\|f\|_\omega\\
& < & \epsilon.
\end{eqnarray*}
Since $f \in L^1(G,\omega)$ is arbitrary satisfying $\|f\|_\omega \leq 1$, we have $\|\varphi_{\gamma_{1}} - \varphi_{\gamma_{1}} \|_{L^{1}(G,\omega)^\ast} < \epsilon$. Thus we proved that $\|\Psi(\gamma_{1}) - \Psi(\gamma_{2}) \|_{L^{1}(G,\omega)^\ast} < \epsilon$ whenever $\gamma_1, \gamma_2 \in U$. Hence the proof.
\end{proof}

\begin{lemma} \label{lemm1}
Let $G$ be a locally compact abelian group, and let $\omega$ be a continuous weight on $G$ such that $\omega^{-1}$ is vanishing at infinity.	Let $\sigma\in C_{\text{BSE}}(\widehat{G}_\omega)$, and let $g:\widehat G \to \mathbb C$ be in $L^1(\widehat G)$. Then $$\left|\int_{\widehat G}g(\gamma)\sigma(\varphi_\gamma)d\gamma\right|\leq \|\sigma\|_{\text{BSE}}\left\|\int_{\widehat G}g(\gamma)\varphi_\gamma d\gamma\right\|_{L^1(G,\omega)^\ast}.$$
\end{lemma}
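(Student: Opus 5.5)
The plan is to approximate the integral $\int_{\widehat G} g(\gamma)\varphi_\gamma\,d\gamma$ by finite Riemann-type sums $\sum_i c_i\varphi_{\gamma_i}$, apply the defining BSE inequality to these sums, and pass to the limit. The limit step on the right-hand side uses the uniform continuity of $\Psi$ (Theorem \ref{thm1}). On the left-hand side it uses continuity and boundedness of $\sigma$.

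First I make sense of the right-hand side. Since $\|\varphi_\gamma\|_{L^1(G,\omega)^\ast}\le 1$ for $\gamma\in\widehat G$ (because $|\gamma|=1\le\omega$), and $\gamma\mapsto\varphi_\gamma$ is norm continuous by Theorem \ref{thm1}, the map $\gamma\mapsto g(\gamma)\varphi_\gamma$ is Bochner integrable into $L^1(G,\omega)^\ast$. Its integral $\Lambda$ satisfies $\Lambda(f)=\int_{\widehat G} g(\gamma)\widehat f(\gamma)\,d\gamma$. Likewise $\sigma$ restricted to $\widehat G$ is bounded and continuous, so the left-hand side is finite.

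Next comes a reduction. Since $C_c(\widehat G)$ is dense in $L^1(\widehat G)$, and both sides are continuous in $g$ with respect to $\|\cdot\|_1$ (the left with constant $\|\sigma\|_\infty$, the right with constant $1$), it suffices to treat $g\in C_c(\widehat G)$. Let $S=\operatorname{supp} g$ be compact and fix $\epsilon>0$.

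By Theorem \ref{thm1} there is a neighbourhood $U$ of $\mathbf 1_{\widehat G}$ such that $\|\varphi_{\gamma_1}-\varphi_{\gamma_2}\|<\epsilon$ whenever $\gamma_1-\gamma_2\in U$. This is the translation-invariant form: since $\varphi_{\gamma_1}(f)-\varphi_{\gamma_2}(f)$ depends on $|\gamma_1-\gamma_2|$ pointwise, the estimate in the proof of Theorem \ref{thm1} actually bounds it by $\sup_K|\gamma_1(s)-\gamma_2(s)|$ on $K$ plus the tail term. I would shrink $U$ further so that $|\sigma(\varphi_{\gamma_1})-\sigma(\varphi_{\gamma_2})|<\epsilon$ and $|g(\gamma_1)-g(\gamma_2)|<\epsilon$ for $\gamma_1,\gamma_2\in S$ with $\gamma_1-\gamma_2\in U$. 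Shrinking for $\sigma$ is possible by uniform continuity of the continuous function $\sigma$ on a compact neighbourhood of $S$.

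Cover $S$ by finitely many translates $\gamma_i+U$ and refine them to a disjoint Borel partition $E_1,\dots,E_n$ of $S$ with $E_i\subseteq\gamma_i+U$. Set $c_i=\int_{E_i} g\,d\gamma$. Then
\[
\Bigl\|\Lambda-\sum_i c_i\varphi_{\gamma_i}\Bigr\|\le \sum_i\int_{E_i}|g(\gamma)|\,\|\varphi_\gamma-\varphi_{\gamma_i}\|\,d\gamma\le\epsilon\|g\|_1 ,
\]
and similarly
\[
\Bigl|\int g\,\sigma\circ\varphi\,d\gamma-\sum_i c_i\sigma(\varphi_{\gamma_i})\Bigr|\le\epsilon\|g\|_1 .
\]
The BSE inequality gives $|\sum_i c_i\sigma(\varphi_{\gamma_i})|\le\|\sigma\|_{\text{BSE}}\|\sum_i c_i\varphi_{\gamma_i}\|$. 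Combining the three estimates yields the claim up to an error of $(1+\|\sigma\|_{\text{BSE}})\epsilon\|g\|_1$. Letting $\epsilon\to0$ finishes the proof.

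The main obstacle is the norm approximation of $\Lambda$ by the Riemann sums, since the BSE inequality only controls finite combinations. This is exactly where the hypothesis that $\omega^{-1}$ vanishes at infinity enters, through the uniform continuity of Theorem \ref{thm1}. I would make sure to extract the translation-invariant version of uniform continuity from that proof rather than the statement as literally written.
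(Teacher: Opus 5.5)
Your proposal is correct and follows essentially the paper's route. Like the paper, you approximate $\int_{\widehat G} g(\gamma)\varphi_\gamma\,d\gamma$ by Riemann sums over a partition of the support of $g\in C_c(\widehat G)$, apply the BSE inequality to the finite sums, pass to the limit using Theorem \ref{thm1}, and extend to $L^1(\widehat G)$ by density. Your version is more careful on two points: the paper only lets $\max_i m(E_i)\to 0$, which does not by itself force the sums to converge, whereas you control the partition by translates $\gamma_i+U$; and the paper uses Theorem \ref{thm1} as literally stated, whereas you correctly extract the translation-invariant form of uniform continuity from its proof.
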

\begin{proof}
First we prove the result when $g \in C_c(\widehat G)$. Since the function $\Psi$ is uniformly continuous by Theorem \ref{thm1} and bounded and $g \in C_c(\widehat G)$, the integral $\int_{\widehat G}g(\gamma)\Psi(\gamma) d\gamma$ exists, i.e., $\int_{\widehat G}g(\gamma)\varphi_\gamma d\gamma$ is an element of $L^1(G,\omega)^\ast$. Let $K$ be the support of $g$. Let $(E_{i})_{i = 1}^{n}$ be a partition of $K$, and let $\gamma_{i} \in E_{i}$ for all $i=1,2,\ldots, n$. Then
\begin{equation*}
\left|\sum_{i=1}^n g(\gamma_i)\sigma(\varphi_{\gamma_i})m(E_{i})\right|  \leq  \|\sigma\|_{\text{BSE}}\left\|\sum_{i=1}^n g(\gamma_i)\varphi_{\gamma_i}m(E_{i})\right\|_{L^1(G,\omega)^\ast}.
\end{equation*}
Applying $\|P\|=\max\{m(E_{i}):1\leq i\leq n\}\to 0$, we have $$\left|\int_{K}g(\gamma)\sigma(\varphi_\gamma)d\gamma \right|\leq \|\sigma\|_{\text{BSE}}\left\|\int_{K}g(\gamma)\varphi_\gamma d\gamma \right\|_{L^1(G,\omega)^\ast}$$ or $$\left|\int_{\widehat G}g(\gamma)\sigma(\varphi_\gamma)d\gamma \right|\leq \|\sigma\|_{\text{BSE}}\left\|\int_{\widehat G}g(\gamma)\varphi_\gamma d\gamma \right\|_{L^1(G,\omega)^\ast}.$$ Let $M$ be a bound of $\sigma$, i.e., $|\sigma(\varphi_\gamma)|\leq M$ for all $\gamma \in \widehat G_\omega$. Now, let $g \in L^1(\widehat G)$ and take any $\epsilon>0$. Then there is $f \in C_c(\widehat G)$ such that $\|g-f\|_1=\int_{\widehat G}|f(\gamma)-g(\gamma)|d\gamma<\epsilon$. This gives $$\int_{\widehat G}|f(\gamma)-g(\gamma)||\sigma(\varphi_\gamma)|d\gamma\leq M\int_{\widehat G}|f(\gamma)-g(\gamma)|d\gamma <M\epsilon$$ and hence $$\left|\int_{\widehat G}g(\gamma)\sigma(\varphi_\gamma)dt\right|<M\epsilon+\left|\int_{\widehat G}f(\gamma)\sigma(\varphi_\gamma)dt\right|.$$ Now,
\begin{eqnarray*}
\left\|\int_{\widehat G}g(\gamma)\varphi_\gamma d\gamma-\int_{\widehat G}f(\gamma)\varphi_\gamma d\gamma \right\|_{L^1(G,\omega)^\ast} &\leq & \int_{\widehat G}|f(\gamma)-g(\gamma)|\|\varphi_\gamma\|_{L^1(G,\omega)^\ast}d\gamma\\
&\leq & \int_{\widehat G}|f(\gamma)-g(\gamma)|d\gamma<\epsilon \ (\text{as} \ \|\varphi_\gamma\|_{L^1(G,\omega)^\ast} \leq 1)
\end{eqnarray*}
and hence $$\left\|\int_{\widehat G}f(\gamma)\varphi_\gamma d\gamma \right\|_{L^1(G,\omega)^\ast}<\epsilon+\left\|\int_{\widehat G}g(\gamma)\varphi_\gamma d\gamma \right\|_{L^1(G,\omega)^\ast}.$$
So,
\begin{eqnarray*}
\left|\int_{\widehat G}g(\gamma)\sigma(\varphi_\gamma)d\gamma \right| & <& M\epsilon+\left|\int_{\widehat G}f(\gamma)\sigma(\varphi_\gamma)d\gamma\right|\\
& \leq & M\epsilon +\|\sigma\|_{\text{BSE}}\left\|\int_{\widehat G}f(\gamma)\varphi_\gamma d\gamma \right\|_{L^1(G,\omega)^\ast}\\
&< & M\epsilon+\|\sigma\|_{\text{BSE}}\,\epsilon+\|\sigma\|_{\text{BSE}}\left\|\int_{\widehat G}g(\gamma)\varphi_\gamma d\gamma \right\|_{L^1(G,\omega)^\ast}.
\end{eqnarray*}
Since $\epsilon>0$ is arbitrary, we have $\displaystyle{\left|\int_{\widehat G}g(\gamma)\sigma(\varphi_\gamma)d\gamma\right|\leq \|\sigma\|_{\text{BSE}}\left\|\int_{\widehat G}g(\gamma)\varphi_\gamma d\gamma \right\|_{L^1(G,\omega)^\ast}}$.
\end{proof}

The following lemma may be known but for completeness we are giving its proof.
\begin{lemma}\label{l}
Let $G$ be a locally compact abelian group, and let $$\mathscr{F} = \left\{f \in  C_{c}(G) :\widehat f \in L^1(\widehat G),\, f(s) = \int_{\widehat G}\widehat{f}(\gamma)\gamma(s)d\gamma\;(s \in G)\right\}.$$  Then $ \mathscr{F}$ is a dense subspace in $C_{0}(G)$.
\end{lemma}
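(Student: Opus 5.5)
Our plan is to show that $\mathscr F$ is a subalgebra of $C_0(G)$ that contains (or approximates) enough functions of the form $h\star\tilde h$ with $h\in C_c(G)$, and then to use the Stone--Weierstrass theorem. A more direct alternative is to approximate an arbitrary $\phi\in C_c(G)$ uniformly by elements of $\mathscr F$; that suffices because $C_c(G)$ is dense in $C_0(G)$. We will take the direct route.

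First, $\mathscr F$ is clearly a linear subspace, since the defining conditions are linear in $f$. The key observation is that for $u,v\in C_c(G)$ the convolution $f=u\star v$ lies in $C_c(G)$, since its support is contained in $\operatorname{supp}u+\operatorname{supp}v$. Moreover $\widehat f=\widehat u\,\widehat v$, and since $u,v\in L^2(G)$, Plancherel gives $\widehat u,\widehat v\in L^2(\widehat G)$. Hence $\widehat f\in L^1(\widehat G)$. Then $f$ is a continuous function in $L^1(G)$ with integrable Fourier transform, so the inversion theorem \cite[\S 1.5.1]{R} (with the normalized Haar measure on $\widehat G$) gives $f(s)=\int_{\widehat G}\widehat f(\gamma)\gamma(s)\,d\gamma$ for almost every $s$. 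Both sides are continuous, so the equality holds for every $s$. Therefore $u\star v\in\mathscr F$ for all $u,v\in C_c(G)$.

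Next, let $\phi\in C_c(G)$ and $\epsilon>0$. By uniform continuity of $\phi$ there is a compact symmetric neighbourhood $V$ of $0$ such that $|\phi(s-t)-\phi(s)|<\epsilon$ for all $s\in G$ and $t\in V$. Choose $u\in C_c(G)$ with $u\ge 0$, $\operatorname{supp}u\subseteq V$ and $\int_G u=1$, which is possible by Urysohn's lemma and local compactness. Then
\[
|(u\star\phi)(s)-\phi(s)|\le\int_V u(t)\,|\phi(s-t)-\phi(s)|\,dt<\epsilon \quad (s\in G).
\]
So $u\star\phi\in\mathscr F$ and $\|u\star\phi-\phi\|_\infty<\epsilon$. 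Density of $C_c(G)$ in $C_0(G)$ then finishes the proof.

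The main obstacle is justifying the pointwise inversion formula, namely identifying $\widehat f\in L^1(\widehat G)$ and choosing the Haar normalization on $\widehat G$ correctly. We handle this through the Plancherel product argument and the cited inversion theorem, which holds for $f\in L^1(G)\cap C(G)$ with $\widehat f\in L^1(\widehat G)$. Alternatively, one can use the fact that $u\star\tilde v$ is a linear combination of positive-definite continuous functions, to which the inversion theorem applies directly.
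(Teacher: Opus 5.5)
Your proof is correct, but it takes a different route from the paper's.

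\textbf{The paper's route.} It uses positive-definite functions $g=f\star\widetilde f$, where $f$ is a normalized indicator of a compact set $K$ with $K-K\subset V$. These serve as bump functions: they lie in $\mathscr F$, satisfy $g(0)=1$ and vanish off $V$. Hence $\mathscr F$ separates points, and the paper then invokes the Stone--Weierstrass theorem.

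\textbf{Your route.} You show that all of $C_c(G)\star C_c(G)$ lies in $\mathscr F$. Plancherel makes $\widehat u\,\widehat v$ integrable, and inversion gives the pointwise formula. Alternatively, polarization writes $u\star\widetilde v$ as a combination of functions $h\star\widetilde h$, which puts you literally in Rudin's $L^1(G)\cap B(G)$ setting. You then approximate each $\phi\in C_c(G)$ uniformly by $u\star\phi$ using an approximate identity.

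\textbf{What each buys.} Your argument needs no algebraic structure on $\mathscr F$ at all. Stone--Weierstrass applies only to subalgebras closed under complex conjugation, whereas the paper merely observes that $\mathscr F$ is a subspace. It leaves unchecked that $\mathscr F$ is closed under pointwise products and conjugation. Both facts are true, since $\widehat{fg}=\widehat f\star\widehat g\in L^1(\widehat G)$ with the inversion formula persisting, and $\widehat{\overline f}(\gamma)=\overline{\widehat f(-\gamma)}$. Still, they require a short argument that the paper omits. Your approach is also explicit: the approximants are concrete convolutions supported in $V+\mathrm{supp}\,\phi$. The paper's approach is shorter once those algebra properties are supplied.
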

\begin{proof}
Observe that $\mathscr{F}$ is a subspace of $C_{0}(G)$. Let $V$ be a neighborhood of $0$ in $G$. Choose a compact set $K$ such that $K - K \subset V$. Let $f$ be the characteristic function of $K$, divided by $\sqrt{m(K)}$. Define $g = f\ast \widetilde{f}$, where $\widetilde f(s)=\overline{f(-s)}$ for all $s \in G$. Then $g$ is continuous,  positive definite and $\widehat{g} \in L^1(\widehat G)$ \cite[Example 1.4.2]{R}. Therefore the inversion theorem applies to $g$, i.e., $g(s)=\int_{\widehat G}\widehat g(\gamma)\gamma(s)d\gamma$ for all $s \in G$. Also, $g(0) = \int_{\widehat G}\widehat{g}(\gamma)d\gamma = 1$ and $g$ is $0$ outside $K - K \subset V$. Thus $g \in \mathscr F$. Let $s \in G$ be such that $s \neq 0$. Choose a neighborhood $V$ of $0$ such that $s \notin V$. Then, by above argument, we get $g \in \mathscr F$ such that $g(0) = 1$ and $g(s)=0$. Hence $\mathscr{F}$ separates points of $G$. Therefore, by Stone-Weierstrass theorem, $\mathscr{F}$ is dense in $C_{0}(G)$.
\end{proof}

\begin{lemma}
Let $G$ be a locally compact abelian group, $\omega$ be a continuous weight on $G$, and let $\mathscr F$ be as in Lemma \ref{l}. Then $\mathscr{F} $ is a dense subspace in the Banach space $\left(C_{0}(G,\frac{1}{\omega}),\| \cdot \|_{0,{\omega}^{-1}}\right)$.
\end{lemma}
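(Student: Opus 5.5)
The plan is to use the standard reduction of the weighted space to the unweighted one via multiplication by $\omega^{-1}$, and then to handle the weight locally using compact support. Since $\omega$ is continuous and $\omega\geq 1$, the map $M:f\mapsto f\omega^{-1}$ is an isometric isomorphism of $\left(C_0(G,\omega^{-1}),\|\cdot\|_{0,\omega^{-1}}\right)$ onto $(C_0(G),\|\cdot\|_\infty)$. Also $\mathscr F\subseteq C_c(G)\subseteq C_0(G,\omega^{-1})$, because a compactly supported continuous function times $\omega^{-1}$ is again in $C_c(G)$. Thus $\mathscr F$ is a subspace of $C_0(G,\omega^{-1})$.

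Density is proved in two steps. First, $C_c(G)$ is dense in $C_0(G,\omega^{-1})$. Given $h\in C_0(G,\omega^{-1})$, the function $h\omega^{-1}$ lies in $C_0(G)$, so we choose $k\in C_c(G)$ with $\|h\omega^{-1}-k\|_\infty<\epsilon$. Then $k\omega\in C_c(G)$ and $\|h-k\omega\|_{0,\omega^{-1}}<\epsilon$.

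Second, each $u\in C_c(G)$ can be approximated in $\|\cdot\|_{0,\omega^{-1}}$ by elements of $\mathscr F$. Since $\omega\geq1$, we have $\|v\|_{0,\omega^{-1}}\leq\|v\|_\infty$ for all $v$. By Lemma \ref{l}, $\mathscr F$ is dense in $C_0(G)$ for the sup norm, so there is $f\in\mathscr F$ with $\|u-f\|_\infty<\epsilon$, and hence $\|u-f\|_{0,\omega^{-1}}<\epsilon$. Combining the two steps with the triangle inequality shows that $\mathscr F$ is dense in $C_0(G,\omega^{-1})$.

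The only point requiring care is the comparison of norms: the inequality $\|\cdot\|_{0,\omega^{-1}}\leq\|\cdot\|_\infty$ on $C_0(G)\subseteq C_0(G,\omega^{-1})$, which follows from $\omega\geq 1$. It is what allows sup-norm density from Lemma \ref{l} to transfer to the weighted norm. Continuity of $\omega$ is used only to ensure that $k\omega$ is continuous in the first step. The step most prone to error is that first reduction: one must approximate $h\omega^{-1}$ in the sup norm, not $h$ itself, because $h$ need not be bounded.
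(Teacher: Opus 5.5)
Your proof is correct and follows the paper's argument essentially step for step. You approximate $h\omega^{-1}$ in the sup norm by some $k\in C_c(G)$, so that $k\omega$ approximates $h$ in $\|\cdot\|_{0,\omega^{-1}}$; you then approximate $k\omega$ by an element of $\mathscr F$ via Lemma \ref{l} and transfer this using $\|\cdot\|_{0,\omega^{-1}}\le\|\cdot\|_\infty$, which holds since $\omega\ge 1$. Your explicit check that $\mathscr F\subseteq C_c(G)\subseteq C_0(G,\omega^{-1})$ is a small point the paper leaves implicit.
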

\begin{proof}
Let $f \in C_{0}(G,\frac{1}{\omega})$ and let $\epsilon > 0$. Then $\frac{f}{\omega} \in C_{0}(G)$. As $C_c(G)$ is dense in $C_0(G)$, there exists $g \in C_{c}(G)$ such that $\|\frac{f}{\omega} - g\|_{\infty} < \frac{\epsilon}{2}$. Since $g \in C_{c}(G)$, $\omega g  \in C_{c}(G)$. Now, $\| f - \omega g\|_{0,{\omega}^{-1}} = \|\frac{f}{\omega} - g\|_{\infty} < \frac{\epsilon}{2}$. As $\omega g  \in C_{c}(G) \subset C_{0}(G)$, by above lemma, there exists $h \in \mathscr{F}$ such that $\|h-\omega g\|_{\infty} < \frac{\epsilon}{2}$. Therefore, we have
\begin{eqnarray*}
\|f - h\|_{0,{\omega}^{-1}} &=&\|f-\omega g + \omega g - h\|_{0,{\omega}^{-1}}\\
&\leq & \|f-\omega g\|_{0,{\omega}^{-1}}+\|\omega g-h\|_{0,{\omega}^{-1}}\\
&\leq& \|f-\omega g\|_{0,{\omega}^{-1}}+\|\omega g - h\|_{\infty}\\
&<& \frac{\epsilon}{2} + \frac{\epsilon}{2}= \epsilon.
\end{eqnarray*}
Hence the proof.
\end{proof}
Now, we give a proof of Theorem \ref{bse}.
\begin{proof}[Proof of Theorem \ref{bse}]
By \cite[lemma 2.1]{G}, $L^1(G,\omega)$ has a bounded approximate identity.   Therefore, by \cite[corollary 5]{T}, $\widehat M(L^1(G,\omega))\subset C_{\text{BSE}}(\Phi_{L^1(G,\omega)})$. Let $\sigma$ be in $C_{\text{BSE}}(\Phi_{L^1(G,\omega)})$. Then
\begin{eqnarray}\label{BSE}
\left|\sum_{i=1}^n c_i\sigma(\varphi_{i})\right|\leq \|\sigma\|_{\text{BSE}}\left\|\sum_{i=1}^nc_i\varphi_{i}\right\|_{L^1(G,\omega)^\ast}
\end{eqnarray}
for all $c_1,c_2,\ldots,c_n \in \mathbb C$, $\varphi_1,\varphi_2,\ldots,\varphi_n \in \Phi_{L^1(G,\omega)}$ and $n \in \mathbb N$. If $\psi \in L^1(G)^\ast \cap L^1(G,\omega)^\ast$, then $\|\psi\|_{L^1(G,\omega)^\ast}=\sup\{|\psi(f)|:f \in L^1(G,\omega), \|f\|_\omega \leq 1\}\leq \sup\{|\psi(f)|:f \in L^1(G), \|f\|_1\leq 1\}=\|\psi\|_{L^1(G)^\ast}$ as if $f \in L^1(G,\omega)$, then $\|f\|_1\leq \|f\|_\omega$. Let $\gamma_1,\gamma_2,\ldots,\gamma_n \in \widehat G$ and $c_1,c_2,\ldots,c_n \in \mathbb C$. Then $\sum_{i=1}^nc_i\varphi_{\gamma_i} \in L^1(G)^\ast \cap L^1(G,\omega)^\ast$. So, by inequality (\ref{BSE}), we have
\begin{eqnarray*}
\left|\sum_{i=1}^n c_i\sigma(\varphi_{\gamma_i})\right|\leq \|\sigma\|_{\text{BSE}}\left\|\sum_{i=1}^nc_i\varphi_{\gamma_i}\right\|_{L^1(G,\omega)^\ast}\leq \|\sigma\|_{\text{BSE}}\left\|\sum_{i=1}^nc_i\varphi_{\gamma_i}\right\|_{L^1(G)^\ast}.
\end{eqnarray*}
It means that $\sigma$ is a BSE- function  on $\Phi_{L^1(G)}$. Since $L^1(G)$ is a BSE- algebra \cite{E}  and the multiplier algebra $M(L^1(G))$ of $L^1(G)$ is the Banach algebra $M(G)$ of all complex regular Borel measures on $G$ \cite[Thorem 0.1.1]{L}, there is $\mu \in M(G)$ such that
\begin{equation*}
\sigma(\varphi_\gamma) =  \widehat \mu(\varphi_\gamma) = \int_{G}\overline{\gamma(s)}d\mu(s)\quad(\gamma \in \widehat{G}).
\end{equation*}
We prove that $\overline{\mu}\in M(G,\omega)$, that is  $\int_{G}\omega(s)d|\overline\mu|(s)<\infty$ and to do this we prove that the mapping $g\mapsto \int_{G}g(s)d\overline\mu(s)$ from $C_0(G,\frac{1}{\omega})$ to $\mathbb C$ is a bounded linear map. Since the dual of $C_0(G,\frac{1}{\omega})$ is $M(G,\omega)$, this will complete the proof.
	
Let $\mathscr F$ be as in Lemma \ref{l}. Define $\eta$ on $\mathscr F$ by
\begin{eqnarray*}
\eta( f)=\int_{G} f(s)d\overline\mu(s)\quad( f \in \mathscr F).
\end{eqnarray*}
The above integral makes sense as $f$ is bounded and $\overline\mu$ is a complex measure. Observe that $\eta$ is linear on $\mathscr F$. Let $f \in \mathscr F$. Then
\begin{eqnarray*}
\eta( f) & = & \int_{G} f(s)d\overline\mu(s) = \int_{G}\left(\int_{\widehat G} \widehat f(\gamma)\gamma(s)d\gamma\right)d\overline\mu(s)\\
& = & \int_{\widehat G}\widehat f(\gamma)\left(\int_{G}\gamma(s)d\overline\mu(s)\right)d\gamma\\
& = & \int_{\widehat G}\widehat f(\gamma)\overline{\left(\int_{G}\overline{\gamma(s)}d\mu(s)\right)}d\gamma\\
& = & \int_{\widehat G}\widehat f(\gamma)\overline{\widehat\mu(\varphi_\gamma)}d\gamma\\
& = & \int_{\widehat G}\widehat f(\gamma)\overline{\sigma(\varphi_\gamma)}d\gamma.
\end{eqnarray*}
Since $\sigma \in C_{\text{BSE}}(\Phi_{L^1(G,\omega)})$ and $\widehat f \in  L^1(\widehat G)$, we have, by lemma 1.3 ,
\begin{eqnarray}\label{e}
|\eta( f)| & =& \left|\int_{\widehat G}\widehat f(\gamma)\overline{\sigma(\varphi_\gamma)}d\gamma\right|=\left|\int_{\widehat G}\overline{\widehat f(\gamma)}\sigma(\varphi_\gamma)d\gamma\right| \nonumber\\
&\leq & \|\sigma\|_{\text{BSE}}\left\|\int_{\widehat G}\overline{\widehat f(\gamma)} \varphi_\gamma d\gamma\right\|_{L^1(G,\omega)^\ast}.
\end{eqnarray}
Let $g \in L^1(G,\omega)$. Then
\begin{eqnarray*}
\left|\left(\int_{\widehat G}\overline{\widehat f(\gamma)}\varphi_\gamma dt\right)(g)\right|& = & \left|\int_{\widehat G}\overline{\widehat f(\gamma)}\varphi_\gamma(g)d\gamma\right|=\left|\int_{\widehat G}\overline{\widehat f(\gamma)}\widehat g(\varphi_\gamma)d\gamma\right|\\
& = & \left|\int_{\widehat G}\overline{\widehat f(\gamma)}\left(\int_{G}g(s)\overline{\gamma(s)}ds\right)d\gamma\right|\\
& = & \left|\int_{G}g(s)\left(\int_{\widehat G}\overline{\widehat f(\gamma)}\overline{\gamma(s)}d\gamma\right)ds\right|\\
& = & \left|\int_{G}g(s) \overline{f(s)}ds\right|\\
& \leq & \int_{G}|g(s)|\omega(s)\frac{|\overline{f(s)}|}{\omega(s)}ds\\
& \leq & \| f\|_{0,\omega^{-1}}\|g\|_\omega.
\end{eqnarray*}
This proves that $\displaystyle{\left\|\int_{\widehat G}\overline{\widehat f(\gamma)}\varphi_\gamma d\gamma\right\|_{L^1(G,\omega)^\ast}\leq \| f\|_{0,\omega^{-1}}}$ and hence, by equation (\ref{e}), $|\eta( f)|\leq \|\sigma\|_{\text{BSE}}\| f\|_{0,\omega^{-1}}$ for all  $ f \in \mathscr F$. As $\mathscr F$ is dense in $(C_0(G,\omega^{-1}),\|\cdot\|_{0,\omega^{-1}})$, $\eta$ has a unique norm preserving extension on $C_0(G,\omega^{-1})$ and this extension will be $$\eta(f)=\int_G f(s)d\overline\mu(s)\quad(f \in C_0(G,\omega^{-1})).$$
This implies that $\overline\mu \in M(G,\omega)$ and hence $\mu\in M(G,\omega)$. This completes the proof.
\end{proof}

We first note the following results. These results will be useful to us in proving the Theorem \ref{bed}.
\begin{theorem}\cite[Theorem 1]{R2}\label{TR1}
Let $G$ be a locally compact abelian group. Then a complex valued continuous function $\sigma$ on $\widehat{G}$ is the Fourier-Stieltjes transform of a singular measure $\mu_s$ on $G$ if and only if
\begin{enumerate}
\item $|\sum_{\gamma \in \widehat G} \widehat p(\varphi_\gamma) \sigma(\varphi_{\gamma})| \leq \|\mu_s\|$ whenever $p = \sum_{\gamma \in \widehat G}\widehat p(\varphi_\gamma)\varphi_\gamma \in \text{span}(\Phi_{L^1(G)})$ and $\|p\|_{L^1(G)^\ast} \leq 1$;
\item given $\epsilon > 0$ and the compact set $K$ in $\widehat{G}$ there is $p = \sum_{\gamma \in \widehat G\setminus K} \widehat p(\varphi_\gamma)\varphi_\gamma \in \text{span}(\Phi_{L^1(G)})$ such that $\|p\|_{L^1(G)^\ast} \leq 1$ and $|\sum_{\gamma \in \widehat G\setminus K} \widehat p(\varphi_\gamma) \sigma(\varphi_\gamma)| > \|\mu_s\| - \epsilon$.
\end{enumerate}
\end{theorem}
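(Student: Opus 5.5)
The plan is to prove the two implications separately, identifying $\mathrm{span}(\Phi_{L^1(G)})$ with trigonometric polynomials. For $p=\sum_\gamma \widehat p(\varphi_\gamma)\varphi_\gamma$, put $P(s)=\sum_\gamma \widehat p(\varphi_\gamma)\overline{\gamma(s)}$. Then $p(f)=\int_G fP\,ds$, so $\|p\|_{L^1(G)^\ast}=\|P\|_\infty$. If $\sigma=\widehat\mu$ for some $\mu\in M(G)$, then
$$\sum_\gamma \widehat p(\varphi_\gamma)\sigma(\varphi_\gamma)=\int_G P\,d\mu .$$
Throughout, the constant in (1) and (2) is read as a fixed $c\ge 0$, which turns out to equal $\|\mu_s\|$.

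\emph{Necessity.} Let $\sigma=\widehat{\mu_s}$ with $\mu_s$ singular. Condition (1) follows at once from $|\int_G P\,d\mu_s|\le \|P\|_\infty\|\mu_s\|$.

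For (2), fix $\epsilon>0$ and a compact $K\subset\widehat G$.
\begin{enumerate}
\item Let $h$ be the Borel function of modulus one with $d|\mu_s|=\overline h\,d\mu_s$. By Lusin's theorem and regularity, choose a compact $E$ with $m(E)=0$, $|\mu_s|(G\setminus E)<\epsilon$, and $h|_E$ continuous.
\item By Stone--Weierstrass (trigonometric polynomials are uniformly dense in the almost periodic functions), get a trigonometric polynomial $P$ with $\|P\|_\infty\le 1+\epsilon$, $P$ close to $h$ on $E$, and $|\int_G P\,d\mu_s|>\|\mu_s\|-2\epsilon$.
\item Remove the frequencies of $P$ lying in $K$. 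Choose $k\in L^1(G)$ with $\widehat k=1$ on $K$ and $\widehat k$ compactly supported, and split $P=(P-k\star P)+k\star P$; here $P-k\star P$ has no frequencies in $K$.
\end{enumerate}
The step I expect to be the main obstacle is controlling the sup norm of the modified polynomial while keeping $\int_G (k\star P)\,d\mu_s$ small. The point to exploit is that $\int_G (k\star P)\,d\mu_s=\int_G P\,d(\widetilde k\star\mu_s)$, and $\widetilde k\star\mu_s$ is absolutely continuous, so it cannot see the Haar-null set $E$.

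The approach I would try first is to translate $P$ in frequency, replacing $P$ by $\chi P$ for a character $\chi$ far out in $\widehat G$. Pairing $\chi P$ with $\mu_s$ amounts to pairing $P$ with $\chi\mu_s$, which is still singular with the same total variation. Pairing with the absolutely continuous measure $\widetilde k\star\mu_s$ tends to $0$ as $\chi\to\infty$ by Riemann--Lebesgue. I would then rebalance $P$ against $h\overline\chi$ on $E$ (re-doing step 2 with $h\overline\chi$), so the main pairing stays near $\|\mu_s\|$. A final normalisation by $1+O(\epsilon)$ gives $\|p\|\le1$.

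\emph{Sufficiency.} Condition (1) says $\sigma$ is a BSE-function for $L^1(G)$ with BSE-norm at most $c$. By the Bochner--Schoenberg--Eberlein theorem, $\sigma=\widehat\mu$ for some $\mu\in M(G)$ with $\|\mu\|\le c$. Decompose $\mu=\mu_a+\nu$ with $\mu_a$ absolutely continuous and $\nu$ singular, so $\|\mu\|=\|\mu_a\|+\|\nu\|$. For $p$ with frequencies off a large compact $K$ and $\|p\|\le 1$, the contribution of $\mu_a=f\,ds$ is $\int_{\widehat G\setminus K}\widehat p\,\widehat f$. This is uniformly small, by approximating $f$ in $L^1$ with $\widehat f$ compactly supported. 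Hence (2) yields $c-\epsilon<\|\nu\|+\epsilon$, so $c\le\|\nu\|$. Therefore $\|\mu_a\|+\|\nu\|\le c\le\|\nu\|$, which forces $\mu_a=0$. Thus $\sigma=\widehat\nu$ is the transform of a singular measure, and $\|\nu\|=c$.
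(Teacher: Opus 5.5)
\textbf{The gap is necessity of (2).} The paper gives no proof of this statement; it quotes it from Doss \cite[Theorem 1]{R2} and uses it only as a black box in the proof of Theorem~\ref{bed}, so your argument has to stand on its own. Your sufficiency direction is correct: Eberlein gives $\mu$ with $\|\mu\|\le c$, the absolutely continuous part is uniformly negligible against polynomials with frequencies off a large compact set, and $\|\mu_a\|+\|\nu\|\le c\le\|\nu\|$ forces $\mu_a=0$. Necessity of (1) is immediate.

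Necessity of (2) is the real content of Doss's theorem, and your sketch does not produce the required polynomial. Translating a fixed $P=\sum_j a_j\overline{\gamma_j}$ by a character does not preserve the main pairing, since
$\int_G \chi P\,d\mu_s=\sum_j a_j\widehat{\mu_s}(\gamma_j-\chi)$.
For a singular Rajchman measure (singular measures with $\widehat{\mu_s}$ vanishing at infinity exist on $\mathbb T$ and $\mathbb R$), this tends to $0$ as $\chi\to\infty$. So everything rests on the ``rebalancing''. But once $P$ is re-chosen to depend on $\chi$, the Riemann--Lebesgue step is no longer available, because it needs $P$ fixed. Indeed, the obvious rebalanced choice $P_\chi=\overline{\chi}P$ gives back $\chi P_\chi=P$. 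The sup-norm problem you flagged is also never resolved. The quantity $\|k\star(\chi P)\|_\infty$ is only bounded by $\|k\|_1\|P\|_\infty$, with $\|k\|_1\ge 1$. If instead $\chi$ is pushed so far out that $k\star(\chi P)=0$, the splitting is vacuous and you again need $\int_G\chi P\,d\mu_s\approx\|\mu_s\|$ directly.

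\textbf{A way to close it.} Your observation that convolution with $k$ produces something that cannot see Haar-null sets is the right one, but it has to be used on the dual side.
\begin{enumerate}
\item Let $\iota:G\to bG$ be the Bohr compactification, and let $Y\subset C(bG)$ be the closure of the trigonometric polynomials with frequencies off $K$. These polynomials extend to $bG$ with the same sup norm.
\item By Hahn--Banach, the supremum in (2) equals $\mathrm{dist}(\iota_*\mu_s,Y^\perp)$, where $Y^\perp=\{\nu\in M(bG):\widehat\nu=0 \text{ off } K\}$.
\item Take such a $\nu$ and $k\in L^1(G)$ with $\widehat k=1$ on $K$. Uniqueness of Fourier--Stieltjes transforms on $bG$ gives $\nu=\iota_*(k\,dm)\star\nu$, hence $|\nu|\le\iota_*(|k|\,dm)\star|\nu|$.
\item A direct computation then shows $|\nu|(\iota(A))=0$ for every compact $A\subset G$ with $m(A)=0$. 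The reason is that, for each $x\in bG$, the set $\{s:\iota(s)+x\in\iota(A)\}$ is either empty or a translate of $A$.
\item Since $\mu_s$ is concentrated on a countable union of such sets, $\nu\perp\iota_*\mu_s$. Therefore $\|\iota_*\mu_s-\nu\|\ge\|\mu_s\|$, and the supremum in (2) is exactly $\|\mu_s\|$.
\end{enumerate}
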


\begin{theorem}\cite[Theorem 2]{R2}\label{TR2}
Let $G$ be a locally compact abelian group. Then a complex valued continuous function $\sigma$ on $\widehat{G}$ is the Fourier-Stieltjes transform of an absolutely continuous measure $\mu$ on $G$ if and only if
\begin{enumerate}
\item $|\sum_{\gamma \in \widehat G} \widehat p(\varphi_\gamma)\sigma(\varphi_\gamma)| \leq \|\mu\|$ whenever $p = \sum_{\gamma \in \widehat G} \widehat p(\varphi_\gamma)\varphi_\gamma \in \text{span}(\Phi_{L^1(G)})$ and $\|p\|_{L^1(G)^\ast} \leq 1$;
\item given $\epsilon > 0$ there is a compact set $K$ in $\widehat G$ such that $|\sum_{\gamma \in \widehat G\setminus K} \widehat p(\varphi_\gamma) \sigma(\varphi_\gamma)| < \epsilon$ whenever $p = \sum_{\gamma \in \widehat G\setminus K} \widehat p(\varphi_\gamma)\varphi_\gamma\in \text{span}(\Phi_{L^1(G)})$ and $\|p\|_{L^1(G)^\ast} \leq 1$.
\end{enumerate}
\end{theorem}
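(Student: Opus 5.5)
Since Theorem \ref{TR1} is already available, I would prove Theorem \ref{TR2} by combining it with the Bochner--Schoenberg--Eberlein theorem for $L^1(G)$ and the Lebesgue decomposition of measures. One fact is used throughout. For $p=\sum_{\gamma}\widehat p(\varphi_\gamma)\varphi_\gamma\in \text{span}(\Phi_{L^1(G)})$, the norm $\|p\|_{L^1(G)^\ast}$ equals the sup norm of the trigonometric polynomial $P(s)=\sum_\gamma \widehat p(\varphi_\gamma)\overline{\gamma(s)}$. This holds because $p(f)=\int_G f(s)P(s)\,ds$, $P$ is continuous, and $L^1(G)^\ast=L^\infty(G)$. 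Consequently, for every $\nu\in M(G)$,
$$\Big|\sum_\gamma \widehat p(\varphi_\gamma)\widehat\nu(\varphi_\gamma)\Big|=\Big|\int_G P\,d\nu\Big|\le \|\nu\|\,\|p\|_{L^1(G)^\ast}. \qquad (\ast)$$

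\emph{Necessity.} Let $\mu=f\,dm$ with $f\in L^1(G)$ and $\sigma=\widehat\mu$. Condition (1) is $(\ast)$ with $\nu=\mu$ and $\|\mu\|=\|f\|_1$. For (2), I would use that the functions $h\in L^1(G)$ with $\widehat h\in C_c(\widehat G)$ are dense in $L^1(G)$, as in \cite{R}. Given $\epsilon>0$, pick such an $h$ with $\|f-h\|_1<\epsilon$ and put $K=\operatorname{supp}\widehat h$. If $\widehat p$ vanishes on $K$ and $\|p\|_{L^1(G)^\ast}\le 1$, then $\sum\widehat p(\varphi_\gamma)\widehat h(\varphi_\gamma)=0$. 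Hence
$$\Big|\sum \widehat p(\varphi_\gamma)\sigma(\varphi_\gamma)\Big|=\Big|\sum \widehat p(\varphi_\gamma)\,(f-h)^{\wedge}(\varphi_\gamma)\Big|\le \|f-h\|_1<\epsilon,$$
again by $(\ast)$.

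\emph{Sufficiency.} By (1), $\sigma$ is a BSE-function on $\Phi_{L^1(G)}$. By the Bochner--Schoenberg--Eberlein theorem (\cite{E}, \cite[Theorem 0.1.1]{L}), $\sigma=\widehat\mu$ for some $\mu\in M(G)$. Write the Lebesgue decomposition $\mu=\mu_a+\mu_s$, with $\mu_a$ absolutely continuous and $\mu_s$ singular. It suffices to show $\mu_s=0$. Fix $\epsilon>0$. By the necessity part applied to $\mu_a$, there is a compact $K_1$ such that $|\sum\widehat p\,\widehat{\mu_a}|<\epsilon$ whenever $\widehat p$ vanishes on $K_1$ and $\|p\|\le 1$. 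Hypothesis (2) gives a compact $K_2$ with the same property for $\sigma$. Put $K=K_1\cup K_2$; then for such $p$ vanishing on $K$,
$$\Big|\sum\widehat p\,\widehat{\mu_s}\Big|\le\Big|\sum\widehat p\,\sigma\Big|+\Big|\sum\widehat p\,\widehat{\mu_a}\Big|<2\epsilon.$$
On the other hand, $\widehat{\mu_s}$ is the Fourier--Stieltjes transform of a singular measure. So Theorem \ref{TR1}(2) provides a $p$ vanishing on $K$ with $\|p\|\le 1$ and $|\sum\widehat p\,\widehat{\mu_s}|>\|\mu_s\|-\epsilon$. Therefore $\|\mu_s\|<3\epsilon$ for every $\epsilon>0$, so $\mu_s=0$ and $\mu$ is absolutely continuous. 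The quantitative bound $\|\mu\|$ in (1) is then consistent, since the constant in (1) is at least the BSE-norm $\|\widehat\mu\|_{\text{BSE}}=\|\mu\|$.

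The main obstacle is the singular-measure statement, Theorem \ref{TR1}(2), which carries all the harmonic analysis. Proving it from scratch would need trigonometric polynomials concentrated near a null set carrying $\mu_s$, with all frequencies outside $K$. Here I take it as given. The remaining delicate points are the density of $\{h:\widehat h\in C_c(\widehat G)\}$ in $L^1(G)$ and the identification $\|p\|_{L^1(G)^\ast}=\|P\|_\infty$. Both are standard, and I would cite \cite{R} for them.
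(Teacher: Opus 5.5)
The paper gives no proof of this statement. It is quoted from Doss and used as a black box, so there is no in-paper argument to compare against directly.

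On its own terms your derivation is correct:
\begin{itemize}
\item $(\ast)$, with $\|p\|_{L^1(G)^\ast}=\|P\|_\infty$ (valid because $P$ is continuous and Haar measure is positive on nonempty open sets), gives (1).
\item Density of $\{h\in L^1(G):\widehat h\in C_c(\widehat G)\}$ in $L^1(G)$ gives (2).
\item In the converse, reading (1) as a BSE inequality with some constant is the only sensible reading, since $\mu$ is not yet available. Eberlein's theorem then shows, a posteriori, that the optimal constant is $\|\mu\|=\|\widehat\mu\|_{\text{BSE}}$.
\end{itemize}

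Your sufficiency step is the same $\epsilon$-splitting the paper performs in Case 2 of the proof of Theorem \ref{bed}. There, $\sigma=\widehat\mu$ with $\mu\in M(G,\omega)$, and the absolutely continuous part is controlled by the cited Theorem \ref{TR2}(2). The hypothesis $\sigma\in C^{0}_{BSE}$ plays the role of your hypothesis (2), and Theorem \ref{TR1}(2) forces $\mu_s=0$.

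What your version adds is the observation that Theorem \ref{TR2} is a formal consequence of three ingredients: the \emph{only if} half of Theorem \ref{TR1}, Eberlein's theorem, and the density of $L^1(G)$ functions with compactly supported Fourier transform.

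As you say, all the genuine harmonic analysis is in Theorem \ref{TR1}(2), and that half cannot be recovered from Theorem \ref{TR2}. Theorem \ref{TR2} only shows that $\widehat{\mu_s}$ is not small at infinity for singular $\mu_s\neq 0$, not the sharp lower bound $\|\mu_s\|-\epsilon$. So it must be taken as independently established, which is how the paper treats it.
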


\begin{proof}[Proof of Theorem \ref{bed}] We consider the following two cases.\\
Case 1: Let $G$ be a discrete group. Since $L^1(G)$ and $L^1(G,\omega)$ have identities, $M(L^1(G))=L^1(G)$ and $M(L^1(G,\omega))=L^1(G,\omega)$. So, $ C_{BSE}(\widehat{G}) = L^{1}(G)$. Now, let $\sigma \in C_{BSE}^{0}(\widehat G_\omega) $. Then $\sigma \in C_{BSE}(\widehat G_ \omega) = \widehat{L^{1}(G, \omega)}$, by Theorem \ref{bse}, hence we have $\sigma=\widehat{f}$ for some $f \in  L^{1}(G,\omega)$. Therefore $L^{1}(G,\omega)$ is a BED in this case.\\

Case 2: Let $G$ not be  discrete. Then $\widehat{G}$ is not compact. Let $\sigma \in C_{BSE}^{0}(\Phi_{L^1(G, \omega)}) $.  Then, by Theorem \ref{bse}, we have $\sigma = \widehat{\mu}$ for some  $\mu \in M(G,\omega)\subset M(G)$. Let $ d\mu = d\mu_{s} + fdm$ be the Lebesgue decomposition of $\mu$ with respect to the Haar measure $m$, where $\mu_{s}$ is a complex measure on $G$ which is mutually singular to $m$ and $f \in L^{1}(G)$. We show that $\|\mu_s\|=0$ and hence $\mu_s=0$.  Let $\epsilon > 0$ be given. As $\sigma \in C_{BSE}^{0}(\Phi_{L^1(G, \omega)})$, by \cite[Definition 3.5]{I}, there exists a compact set $K'$ in $\widehat G_\omega$ such that if $p= \sum_{\gamma \in \widehat G_\omega\setminus K'} \widehat p(\varphi_\gamma)\varphi_\gamma \in \text{span}(\Phi_{L^1(G,\omega)})$ and $\|p\|_{L^1(G,\omega)^\ast} \leq 1$, then $|\sum_{\gamma \in \widehat G_\omega\setminus K'} \widehat p(\varphi_\gamma)\sigma(\varphi_\gamma)| \leq \epsilon$. Also, for  the same $\epsilon > 0$, by Theorem \ref{TR2}(2), we get a compact subset $K_{1}$ of $\widehat{G}$ such that $|\sum_{\gamma \in \widehat G\setminus K_1} \widehat p(\varphi_\gamma) \widehat f(\varphi_\gamma)| < \epsilon$ whenever $p = \sum_{\gamma \in \widehat G\setminus K_1} \widehat p(\varphi_\gamma)\varphi_\gamma\in \text{span}(\Phi_{L^1(G)})$ and $\|p\|_{L^1(G)^\ast} \leq 1$. Let $K = K_{1} \cup (K' \cap \widehat{G})$. Since $\widehat{G}$ is a closed subset of $\widehat G_\omega$, $K$ is a compact subset of $\widehat G$. By Theorem \ref{TR1}(2),  we get $q=\sum_{\gamma \in \widehat G\setminus K}\widehat q(\varphi_\gamma)\varphi_\gamma \in \text{span}(\Phi_{L^1(G)})$ with  $\|q\|_{L^1(G)^\ast} \leq 1$ and $|\sum_{\gamma \in \widehat G\setminus K} \widehat q(\varphi_\gamma) \widehat{\mu_s}(\varphi_\gamma)| > \|\mu_s\| - \epsilon$. Also,  $|\sum_{\gamma \in \widehat G\setminus K} \widehat q(\varphi_\gamma) \widehat{f}(\varphi_\gamma)| < \epsilon$ as $K_1 \subset K$. Also, observe that $|\sum_{\gamma \in \widehat G_\omega\setminus K}\widehat q(\varphi_\gamma)\sigma(\varphi_\gamma)|<\epsilon$ as $ (K' \cap \widehat{G}) \subset K$ and  $\|q\|_{L^1(G)^\ast} \leq 1$ implies $\|q\|_{L^1(G,\omega)^\ast} \leq 1$.  So,
\begin{eqnarray*}
\|\mu_s\| - \epsilon < \left|\sum_{\gamma \in \widehat G\setminus K} \widehat q(\varphi_\gamma) \widehat{\mu_s}(\varphi_\gamma)\right|
= \left|\sum_{\gamma \in \widehat G\setminus K} \widehat q(\varphi_\gamma) \sigma(\varphi_\gamma) - \sum_{\gamma \in \widehat G\setminus K} \widehat q(\varphi_\gamma) \widehat f(\varphi_\gamma)\right| <  2\epsilon
\end{eqnarray*}
Since, $\epsilon > 0$ is arbitrary, we have $\|\mu_s\|=0$, i.e., $\mu_{s} = 0$. Hence we have $\widehat{\mu} = \widehat{f}$ or $d\mu=fdm$. Since $\mu \in M(G,\omega)$, it follows that $f \in L^{1}(G, \omega)$ and hence $\sigma = \widehat{f}$. This proves that $L^1(G,\omega)$ is a BED- algebra.
\end{proof}

\begin{remark}
We do not know whether Theorem \ref{bse} and Theorem \ref{bed} hold true without the assumption that $\omega^{-1}$ is vanishing at infinity.
\end{remark}

\end{document}